\date{\today}
\newtheorem{theorem}{Теорема}
\newtheorem{proposition}{Твердження}
\newtheorem{corollary}{Наслiдок}
\newtheorem{lemma}{Лема}
\theoremstyle{definition}
\newtheorem{example}{Приклад}
\newtheorem{remark}{Зауваження}
\newtheorem{definition}{Означення}
\begin{document}

\title[Про одне узагальнення бiциклiчного моно\"{\i}да]{Про одне узагальнення бiциклiчного моно\"{\i}да}

\author[Олег~Гутік, Микола Михаленич]{Олег~Гутік, Микола Михаленич}
\address{Механіко-математичний факультет, Львівський національний університет ім. Івана Франка, Університецька 1, Львів, 79000, Україна}
\email{oleg.gutik@lnu.edu.ua,
ovgutik@yahoo.com, myhalenychmc@gmail.com}

\keywords{semigroup, bicyclic monoid, extension}
\subjclass[2020]{20M15,  20M50, 18B40.}

\begin{abstract}
Вводимо алгебричне розширення $\boldsymbol{B}_{\omega}^{\mathscr{F}}$ біциклічного моноїда для довільної $\omega$-замк\-не\-ної сім'ї $\mathscr{F}$ підмножин в $\omega$, які узагальнюють біциклічний моноїд, зліченну напівгрупу матричних одиниць і деякі інші комбінаторні інверсні напівгрупи. Доведено, що $\boldsymbol{B}_{\omega}^{\mathscr{F}}$ є комбінаторною інверсною напівгрупою, а також описано відношення Ґріна, частковий природний порядок на напівгрупі $\boldsymbol{B}_{\omega}^{\mathscr{F}}$ та її множину ідемпотентів. Також доведено критерії  простоти, $0$-простоти, біпростоти та $0$-біпростоти напівгрупи $\boldsymbol{B}_{\omega}^{\mathscr{F}}$, а також коли $\boldsymbol{B}_{\omega}^{\mathscr{F}}$ містить одиницю, ізоморфна біциклічному моноїду або зліченній напівгрупі матричних одиниць.

\emph{\textbf{Ключові слова:}} Напівгрупа, біциклічний моноїд, розширення.

\bigskip
\noindent
\emph{Oleg Gutik, Mykola Mykhalenych, \textbf{On some generalization of the bicyclic monoid},} Visnyk Lviv. Univ. Ser. Mech. Math. \textbf{90} (2020), 5--19.

\smallskip
\noindent
We introduce an algebraic extension $\boldsymbol{B}_{\omega}^{\mathscr{F}}$ of the bicyclic monoid  for an arbitrary $\omega$-closed family $\mathscr{F}$ subsets of $\omega$  which generalizes the bicyclic monoid, the countable semigroup of matrix units and some other combinatorial inverse semigroups. It is proved that $\boldsymbol{B}_{\omega}^{\mathscr{F}}$ is a combinatorial inverse semigroup and Green's relations, the natural partial order on $\boldsymbol{B}_{\omega}^{\mathscr{F}}$, and its set of idempotents are described. We provide criteria of simplicity, $0$-simplicity, bisimplicity, $0$-bisimplicity of the semigroup $\boldsymbol{B}_{\omega}^{\mathscr{F}}$ and when $\boldsymbol{B}_{\omega}^{\mathscr{F}}$ has the identity, is isomorphic to the bicyclic semigroup or the countable semigroup of matrix units.
\end{abstract}

\maketitle


\section{\textbf{Вступ}}\label{section-1}

Ми користуватимемось термінологією з \cite{Clifford-Preston-1961, Clifford-Preston-1967, Lawson-1998, Petrich1984}.
Надалі у тексті множину невід'ємних цілих чисел  позначатимемо через $\omega$. Для довільного $k\in\omega$ позначимо $[k)=\{i\in\omega\colon i\geqslant k\}$.

Підмножина $A$ в $\omega$ називається \emph{індуктивною}, якщо з того, що $i\in A$ випливає, що $i+1\in A$. Очевидно, що $\varnothing$ --- індуктивна множина в $\omega$, і непорожня підмножина $A\subseteq\omega$ є індук\-тивною тоді і лише тоді, коли $A=[k)$ для деякого $k\in\omega$.

Якщо $S$~--- напівгрупа, то її підмножина ідемпотентів позначається через $E(S)$.  На\-пів\-гру\-па $S$ називається \emph{інверсною}, якщо для довільного її елемента $x$ існує єдиний елемент $x^{-1}\in S$ такий, що $xx^{-1}x=x$ та $x^{-1}xx^{-1}=x^{-1}$ \cite{Vagner-1952, Petrich1984}. В інверсній напівгрупі $S$ вище означений елемент $x^{-1}$ називається \emph{інверсним до} $x$. \emph{В'язка}~--- це напівгрупа ідемпотентів, а \emph{напівґратка}~--- це комутативна в'язка.

Якщо $S$ --- напівгрупа, то ми позначатимемо відношення Ґріна на $S$ через $\mathscr{R}$, $\mathscr{L}$, $\mathscr{D}$, $\mathscr{H}$ і $\mathscr{J}$ (див. означення в \cite[\S2.1]{Clifford-Preston-1961} або \cite{Green-1951}). Напівгрупа $S$ називається \emph{простою}, якщо $S$ не містить власних двобічних ідеалів, тобто $S$ складається з одного $\mathscr{J}$-класу, і \emph{біпростою}, якщо $S$ складається з одного $\mathscr{D}$-класу.

Відношення еквівалентності $\mathfrak{K}$ на напівгрупі $S$ називається \emph{конгруенцією}, якщо для елементів $a$ та $b$ напівгрупи $S$ з того, що виконується умова $(a,b)\in\mathfrak{K}$ випливає, що $(ca,cb), (ad,bd) \in\mathfrak{K}$, для довільних $c,d\in S$. Відношення $(a,b)\in\mathfrak{K}$ також будемо записувати $a\mathfrak{K}b$, і в цьому випадку будемо говорити, що \emph{елементи $a$ i $b$  $\mathfrak{K}$-еквівалентні}.

Якщо $S$~--- напівгрупа, то на $E(S)$ визначено частковий порядок:
$
e\preccurlyeq f
$   тоді і лише тоді, коли
$ef=fe=e$.
Так означений частковий порядок на $E(S)$ називається \emph{при\-род\-ним}.

Означимо відношення $\preccurlyeq$ на інверсній напівгрупі $S$ так:
$
    s\preccurlyeq t
$
тоді і лише тоді, коли $s=te$.
для деякого ідемпотента $e\in S$. Так означений частковий порядок назива\-єть\-ся \emph{при\-род\-ним част\-ковим порядком} на інверсній напівгрупі $S$~\cite{Vagner-1952}. Очевидно, що звуження природного часткового порядку $\preccurlyeq$ на інверсній напівгрупі $S$ на її в'язку $E(S)$ є при\-род\-ним частковим порядком на $E(S)$.

Нагадаємо (див.  \cite[\S1.12]{Clifford-Preston-1961}, що \emph{біциклічною напівгрупою} (або \emph{біциклічним моноїдом}) ${\mathscr{C}}(p,q)$ називається напівгрупа з одиницею, породжена двоелементною мно\-жи\-ною $\{p,q\}$ і визначена одним  співвідношенням $pq=1$. Біциклічна на\-пів\-група відіграє важливу роль у теорії
на\-півгруп. Зокрема, класична теорема О.~Ан\-дерсена \cite{Andersen-1952}  стверд\-жує, що {($0$-)}прос\-та напівгрупа з (ненульовим) ідем\-по\-тен\-том є цілком {($0$-)}прос\-тою тоді і лише тоді, коли вона не містить ізоморфну копію бі\-циклічного моноїда. Різні розширення біциклічного моноїда вводили раніше різ\-ні автори \cite{Fortunatov-1976, Fotedar-1974, Fotedar-1978, Warne-1967}. Такими є, зокрема, конструкції Брука та Брука--Рейлі занурення напівгруп у прості та описання інверсних біпростих і $0$-біпростих $\omega$-напівгруп \cite{Bruck-1958, Reilly-1966, Warne-1966, Gutik-2018}.

\begin{remark}\label{remark-10}
Легко бачити, що біциклічний моноїд ${\mathscr{C}}(p,q)$ ізоморфний напівгрупі, заданій на множині $\boldsymbol{B}_{\omega}=\omega\times\omega$ з напівгруповою операцією
\begin{align*}
  (i_1,j_1)\cdot(i_2,j_2)&=(i_1+i_2-\min\{j_1,i_2\},j_1+j_2-\min\{j_1,i_2\})=\\
  &=
\left\{
  \begin{array}{ll}
    (i_1-j_1+i_2,j_2), & \hbox{якщо~} j_1\leqslant i_2;\\
    (i_1,j_1-i_2+j_2), & \hbox{якщо~} j_1\geqslant i_2.
  \end{array}
\right.
\end{align*}
\end{remark}

Ми вводимо алгебричні розширення $\boldsymbol{B}_{\omega}^{\mathscr{F}}$ біциклічного моноїда для довільної $\omega$-замк\-не\-ної сім'ї $\mathscr{F}$ підмножин в $\omega$, які узагальнюють біциклічний моноїд, зліченну напівгрупу матричних одиниць і деякі інші комбінаторні інверсні напівгрупи. Доведено, що $\boldsymbol{B}_{\omega}^{\mathscr{F}}$ є комбінаторною інверсною напівгрупою, а також описано відношення Ґріна, частковий природний порядок на напівгрупі $\boldsymbol{B}_{\omega}^{\mathscr{F}}$ та її множину ідемпотентів. Також доведено критерії  простоти, $0$-простоти, біпростоти та $0$-біпростоти напівгрупи $\boldsymbol{B}_{\omega}^{\mathscr{F}}$, а також коли $\boldsymbol{B}_{\omega}^{\mathscr{F}}$ містить одиницю, ізоморфна біциклічному моноїду або зліченній напівгрупі матричних одиниць.

\section{\textbf{Конструкція розширення $\boldsymbol{B}_{\omega}^{\mathscr{F}}$}}\label{section-2}

Нехай $\mathscr{P}(\omega)$~--- сім'я усіх підмножин ординалу $\omega$.
Для довільних $F\in\mathscr{P}(\omega)$ i $n,m\in\omega$ приймемо
\begin{equation*}
n-m+F=\{n-m+k\colon k\in F\}, \qquad \hbox{якщо} \; F\neq\varnothing
\end{equation*}
i $n-m+F=\varnothing$, якщо $F=\varnothing$.

Будемо говорити, що підсім'я  $\mathscr{F}\subseteq\mathscr{P}(\omega)$ є \emph{${\omega}$-замкненою}, якщо $F_1\cap(-n+F_2)\in\mathscr{F}$ для довільних $n\in\omega$ i $F_1,F_2\in\mathscr{F}$.

Нехай $\boldsymbol{B}_{\omega}$~--- біциклічний моноїд і  $\mathscr{F}$ --- ${\omega}$-замкнена підсім'я в  $\mathscr{P}(\omega)$. На множині $\boldsymbol{B}_{\omega}\times\mathscr{F}$ означимо бінарну операцію ``$\cdot$''  формулою
\begin{equation*}
  (i_1,j_1,F_1)\cdot(i_2,j_2,F_2)=
  \left\{
    \begin{array}{ll}
      (i_1-j_1+i_2,j_2,(j_1-i_2+F_1)\cap F_2), & \hbox{якщо~} j_1\leqslant i_2;\\
      (i_1,j_1-i_2+j_2,F_1\cap (i_2-j_1+F_2)), & \hbox{якщо~} j_1\geqslant i_2.
    \end{array}
  \right.
\end{equation*}

Тоді отримуємо, що
\begin{align*}
  ((i_1,j_1,F_1)&\cdot(i_2,j_2,F_2)) \cdot (i_3,j_3,F_3)=\\
  &=
  \left\{
    \begin{array}{ll}
      (i_1{-}j_1{+}i_2,j_2,(j_1{-}i_2{+}F_1)\cap F_2){\cdot}(i_3,j_3,F_3), & \hbox{якщо~} j_1\leqslant i_2;\\
      (i_1,j_1{-}i_2{+}j_2,F_1\cap (i_2{-}j_1{+}F_2)){\cdot}(i_3,j_3,F_3), & \hbox{якщо~} j_1\geqslant i_2
    \end{array}
  \right.
{=}\\
    &{=}
    \left\{
    \begin{array}{l}
      (i_1{-}j_1{+}i_2{-}j_2{+}i_3,j_3,(j_2{-}i_3{+}((j_1{-}i_2{+}F_1)\cap F_2))\cap F_3),\\
         \qquad \qquad\qquad\qquad\qquad\qquad\qquad
         \hbox{якщо~} j_1 \leqslant i_2 \hbox{~i~} j_2 \leqslant i_3;\\
      (i_1{-}j_1{+}i_2,j_2{-}i_3{+}j_3,(j_1{-}i_2{+}F_1)\cap F_2\cap(i_3{-}j_2{+}F_3)),\\
         \qquad \qquad\qquad\qquad\qquad\qquad\qquad
         \hbox{якщо~} j_1 \leqslant i_2 \hbox{~i~} j_2 \geqslant i_3;\\
      (i_1{-}j_1{+}i_2{-}j_2{+}i_3,j_3,j_1{-}i_2{+}j_2{-}i_3{+}(F_1\cap (i_2{-}j_1{+}F_2))\cap F_3),\\
         \qquad \qquad\qquad\qquad\qquad\qquad\qquad
         \hbox{якщо~} j_1 \geqslant i_2 \hbox{~i~} j_1{-}i_2{+}j_2 \leqslant i_3;\\
      (i_1,j_1{-}i_2{+}j_2{-}i_3{+}j_3,(F_1\cap (i_2{-}j_1{+}F_2))\cap(j_3{-}j_1{+}i_2{-}j_2{+}F_3)),\\
         \qquad \qquad\qquad\qquad\qquad\qquad\qquad
         \hbox{якщо~} j_1 \geqslant i_2 \hbox{~i~} j_1{-}i_2{+}j_2 \geqslant i_3
    \end{array}
  \right.
=\\
    &=
    \left\{
    \begin{array}{llr}
      (i_1{-}j_1{+}i_2{-}j_2{+}i_3,j_3,(j_2{-}i_3{+}j_1{-}i_2{+}F_1)\cap(j_2{-}i_3{+} F_2)\cap F_3),\\
         \qquad \qquad\qquad\qquad\qquad\qquad\qquad
         \hbox{якщо~} j_1 \leqslant i_2 \hbox{~i~} j_2 \leqslant i_3;                             &&(1_1)\\
      (i_1{-}j_1{+}i_2,j_2{-}i_3{+}j_3,(j_1{-}i_2{+}F_1)\cap F_2\cap(i_3{-}j_2{+}F_3)),\\
         \qquad \qquad\qquad\qquad\qquad\qquad\qquad
         \hbox{якщо~} j_1 \leqslant i_2 \hbox{~i~} j_2 \geqslant i_3;                             &&(2_1)\\
      (i_1{-}j_1{+}i_2{-}j_2{+}i_3,j_3,(j_1{-}i_2{+}j_2{-}i_3{+}F_1)\cap (j_2{-}i_3{+}F_2)\cap F_3),\\
         \qquad \qquad\qquad\qquad\qquad\qquad\qquad
         \hbox{якщо~} j_1 \geqslant i_2 \hbox{~i~} j_1{-}i_2{+}j_2 \leqslant i_3;                 &&(3_1)\\
      (i_1,j_1{-}i_2{+}j_2{-}i_3{+}j_3,F_1\cap (i_2{-}j_1{+}F_2)\cap(j_3{-}j_1{+}i_2{-}j_2{+}F_3)),\\
         \qquad \qquad\qquad\qquad\qquad\qquad\qquad
         \hbox{якщо~} j_1 \geqslant i_2 \hbox{~i~} j_1{-}i_2{+}j_2 \geqslant i_3                  &&(4_1)
    \end{array}
  \right.
\end{align*}
i
\begin{align*}
(i_1,j_1,F_1)& \cdot((i_2,j_2,F_2)\cdot(i_3,j_3,F_3))=\\
&=\left\{
    \begin{array}{ll}
      (i_1,j_1,F_1){\cdot}(i_2{-}j_2{+}i_3,j_3,(j_2{-}i_3{+}F_2)\cap F_3), & \hbox{якщо~} j_2\leqslant i_3;\\
      (i_1,j_1,F_1){\cdot}(i_2,j_2{-}i_3{+}j_3,F_2\cap (i_3{-}j_2{+}F_3)), & \hbox{якщо~} j_2\geqslant i_3
    \end{array}
  \right.
=
\\
  & =
\left\{
    \begin{array}{l}
      (i_1{-}j_1{+}i_2{-}j_2{+}i_3,j_3,(j_1{-}i_2{+}j_2{-}i_3{+}F_1)\cap(j_2{-}i_3{+}F_2)\cap F_3),\\
         \qquad \qquad\qquad\qquad\qquad\qquad\qquad \hbox{якщо~} j_2 \leqslant i_3 \hbox{~i~} j_1 \leqslant i_2{-}j_2{+}i_3;\\
      (i_1,j_1{-}i_2{+}j_2{-}i_3{+}j_3,F_1\cap(i_2{-}j_2{+}i_3{-}j_1{+}((j_2{-}i_3{+}F_2)\cap F_3))),\\
         \qquad \qquad\qquad\qquad\qquad\qquad\qquad \hbox{якщо~} j_2 \leqslant i_3 \hbox{~i~} j_1 \geqslant i_2{-}j_2{+}i_3;\\
      (i_1{-}j_1{+}i_2,j_2{-}i_3{+}j_3,(j_1{-}j_2{+}i_3{-}j_3{+}F_1)\cap F_2\cap (i_3{-}j_2{+}F_3)),\\
         \qquad \qquad\qquad\qquad\qquad\qquad\qquad \hbox{якщо~} j_2 \geqslant i_3 \hbox{~i~} j_1 \leqslant i_2;\\
      (i_1,j_1{-}i_2{+}j_2{-}i_3{+}j_3,F_1\cap(i_2{-}j_1{+}(F_2\cap (i_3{-}j_2{+}F_3)))),\\
         \qquad \qquad\qquad\qquad\qquad\qquad\qquad \hbox{якщо~} j_2 \geqslant i_3 \hbox{~i~} j_1 \geqslant i_2;
    \end{array}
  \right.
=
\end{align*}

\begin{align*}
 \qquad \qquad &=
\left\{
    \begin{array}{llr}
      (i_1{-}j_1{+}i_2{-}j_2{+}i_3,j_3,(j_1{-}i_2{+}j_2{-}i_3{+}F_1)\cap(j_2{-}i_3{+}F_2)\cap F_3),\\
         \qquad \qquad\qquad\qquad\qquad\qquad\qquad \hbox{якщо~} j_2 \leqslant i_3 \hbox{~i~} j_1 \leqslant i_2{-}j_2{+}i_3; && (1_2)\\
      (i_1,j_1{-}i_2{+}j_2{-}i_3{+}j_3,F_1\cap(i_2{-}j_1{+}F_2)\cap(i_2{-}j_2{+}i_3{-}j_1{+}F_3)),\\
         \qquad \qquad\qquad\qquad\qquad\qquad\qquad \hbox{якщо~} j_2 \leqslant i_3 \hbox{~i~} j_1 \geqslant i_2{-}j_2{+}i_3; && (2_2)\\
      (i_1{-}j_1{+}i_2,j_2{-}i_3{+}j_3,(j_1{-}j_2{+}i_3{-}j_3{+}F_1)\cap F_2\cap (i_3{-}j_2{+}F_3)),\\
         \qquad \qquad\qquad\qquad\qquad\qquad\qquad \hbox{якщо~} j_2 \geqslant i_3 \hbox{~i~} j_1 \leqslant i_2;             && (3_2)\\
      (i_1,j_1{-}i_2{+}j_2{-}i_3{+}j_3,F_1\cap(i_2{-}j_1{+}F_2)\cap (i_2{-}j_1{+}i_3{-}j_2{+}F_3)),\\
         \qquad \qquad\qquad\qquad\qquad\qquad\qquad \hbox{якщо~} j_2\geqslant i_3 \hbox{~i~} j_1\geqslant i_2.               && (4_2)
    \end{array}
  \right.
\end{align*}

Аналогічно, як і у випадку доведення асоціативності напівгрупової операції для біциклічного моноїда, маємо такі еквівалентності умов:
\begin{equation*}
  (1_1)\vee(3_1)\Longleftrightarrow (1_2), \qquad (2_1)\Longleftrightarrow(3_2), \qquad (4_1)\Longleftrightarrow(2_2)\vee(4_2).
\end{equation*}

Отже, ми довели таке твердження.

\begin{proposition}\label{proposition-2.1}
Якщо сім'я  $\mathscr{F}\subseteq\mathscr{P}(\omega)$ є ${\omega}$-замкненою, то $(\boldsymbol{B}_{\omega}\times\mathscr{F},\cdot)$ є напівгрупою.
\end{proposition}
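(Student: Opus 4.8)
Denote a generic element of $\boldsymbol{B}_{\omega}\times\mathscr{F}$ by a triple $(i,j,F)$. The plan is to split the verification into three independent parts: closure of the operation (the only place where $\omega$-closedness of $\mathscr{F}$ is used), agreement of the first two coordinates under reassociation, and agreement of the third coordinate. For closure, observe that in each branch of the definition the third coordinate of $(i_1,j_1,F_1)\cdot(i_2,j_2,F_2)$ is $(j_1-i_2+F_1)\cap F_2$ (when $j_1\leqslant i_2$) or $F_1\cap(i_2-j_1+F_2)$ (when $j_1\geqslant i_2$); in both cases the shift applied to the nonempty factor, namely $j_1-i_2\leqslant 0$, respectively $i_2-j_1\leqslant 0$, is nonpositive, so the set has the shape $G_1\cap(-n+G_2)$ with $G_1,G_2\in\mathscr{F}$ and $n\in\omega$, hence belongs to $\mathscr{F}$ because $\mathscr{F}$ is $\omega$-closed. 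Applying this twice, the third coordinate of a triple product again lies in $\mathscr{F}$, so $\boldsymbol{B}_{\omega}\times\mathscr{F}$ is closed under the operation.

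For the first two coordinates, the multiplication rule on them coincides verbatim with that of the bicyclic semigroup $\boldsymbol{B}_{\omega}$ of Remark~\ref{remark-10}; hence the first two coordinates of $(a\cdot b)\cdot c$ and of $a\cdot(b\cdot c)$ agree because $\boldsymbol{B}_{\omega}$ is associative (this is also visible directly by comparing the first two entries of $(1_1)$--$(4_1)$ with those of $(1_2)$--$(4_2)$). It therefore remains to prove that the third coordinates coincide, and this is essentially contained in the computation displayed before the statement: $(1_1)$--$(4_1)$ exhaust the four possible forms of the third coordinate of $((i_1,j_1,F_1)\cdot(i_2,j_2,F_2))\cdot(i_3,j_3,F_3)$ according to the order of $j_1,i_2$ and then of $j_2,i_3$ (or of $j_1-i_2+j_2,i_3$), while $(1_2)$--$(4_2)$ do the same for $(i_1,j_1,F_1)\cdot((i_2,j_2,F_2)\cdot(i_3,j_3,F_3))$ according to the order of $j_2,i_3$ and then of $j_1,i_2-j_2+i_3$ (or of $j_1,i_2$). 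I would verify the Boolean equivalences $(1_1)\vee(3_1)\Longleftrightarrow(1_2)$, $(2_1)\Longleftrightarrow(3_2)$, $(4_1)\Longleftrightarrow(2_2)\vee(4_2)$ between these systems of inequalities by elementary manipulation, and then check that on each resulting common cell the two displayed third coordinates are literally the same subset of $\omega$, using only that $\cap$ is commutative and associative and that the shifts compose additively (for instance, in $(1_1)$, $(3_1)$ and $(1_2)$ this set is $(j_1-i_2+j_2-i_3+F_1)\cap(j_2-i_3+F_2)\cap F_3$). Together with the previous paragraph this yields $(a\cdot b)\cdot c=a\cdot(b\cdot c)$ for all $a,b,c\in\boldsymbol{B}_{\omega}\times\mathscr{F}$.

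I expect the only obstacle to be the bookkeeping in the last step: three equivalences between systems of inequalities in $j_1,i_2,j_2,i_3$ and, on each common cell, a literal match of shift amounts. This is a finite, purely mechanical check with no conceptual content; all the genuine structure is already exposed by the case-by-case reduction preceding the statement, and $\omega$-closedness of $\mathscr{F}$ enters only through closure of the operation.
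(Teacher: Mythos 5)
Your proposal is correct and follows essentially the same route as the paper: the paper's proof is precisely the displayed case-by-case computation of $((i_1,j_1,F_1)\cdot(i_2,j_2,F_2))\cdot(i_3,j_3,F_3)$ and $(i_1,j_1,F_1)\cdot((i_2,j_2,F_2)\cdot(i_3,j_3,F_3))$ together with the equivalences $(1_1)\vee(3_1)\Longleftrightarrow(1_2)$, $(2_1)\Longleftrightarrow(3_2)$, $(4_1)\Longleftrightarrow(2_2)\vee(4_2)$, which you reproduce, and your explicit remarks on closure (where $\omega$-closedness enters) and on the first two coordinates following bicyclic multiplication are only a slight elaboration of what the paper leaves implicit.
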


Припустимо, що ${\omega}$-замкнена сім'я $\mathscr{F}\subseteq\mathscr{P}(\omega)$ містить порожню множину $\varnothing$, то з означення напівгрупової операції $(\boldsymbol{B}_{\omega}\times\mathscr{F},\cdot)$ випливає, що множина
$ 
  \boldsymbol{I}=\{(i,j,\varnothing)\colon i,j\in\omega\}
$ 
є ідеалом напівгрупи $(\boldsymbol{B}_{\omega}\times\mathscr{F},\cdot)$.

\begin{definition}\label{definition-2.2}
Для довільної ${\omega}$-замкненої сім'ї $\mathscr{F}\subseteq\mathscr{P}(\omega)$ означимо
\begin{equation*}
  \boldsymbol{B}_{\omega}^{\mathscr{F}}=
\left\{
  \begin{array}{ll}
    (\boldsymbol{B}_{\omega}\times\mathscr{F},\cdot)/\boldsymbol{I}, & \hbox{якщо~} \varnothing\in\mathscr{F};\\
    (\boldsymbol{B}_{\omega}\times\mathscr{F},\cdot), & \hbox{якщо~} \varnothing\notin\mathscr{F}.
  \end{array}
\right.
\end{equation*}
\end{definition}

\section{\textbf{Алгебричні властивості напівгрупи $\boldsymbol{B}_{\omega}^{\mathscr{F}}$}}\label{section-3}

Надалі ми вважатимемо, що $\mathscr{F}$ --- ${\omega}$-замкнена підсім'я в  $\mathscr{P}(\omega)$.

Твердження леми \ref{lemma-3.1} очевидне.

\begin{lemma}\label{lemma-3.1}
Напівгрупа $\boldsymbol{B}_{\omega}^{\mathscr{F}}$ містить нуль  $\boldsymbol{0}$ тоді і лише тоді, коли $\varnothing\in\mathscr{F}$, причому нуль $\boldsymbol{0}$ є образом ідеала $\boldsymbol{I}$ при природному гомоморфізмі, породженому конгруенцією Ріса
\begin{equation*}
\boldsymbol{\mathfrak{C}}_{\boldsymbol{I}}=\left\{(x,x)\colon x\in\boldsymbol{B}_{\omega}\times\mathscr{F} \right\}\cup(\boldsymbol{I}\times\boldsymbol{I})
\end{equation*}
на напівгрупі $(\boldsymbol{B}_{\omega}\times\mathscr{F},\cdot)$.
\end{lemma}

\begin{lemma}\label{lemma-3.2}
Ненульовий елемент $(i,j,F)$ напівгрупи $\boldsymbol{B}_{\omega}^{\mathscr{F}}$ є ідемпотентом тоді і лише тоді, коли $i=j$.
\end{lemma}

\begin{proof}
$(\Longleftarrow)$ Якщо $i=j$, то маємо, шо
\begin{equation*}
(i,i,F)\cdot(i,i,F)=(i,i,F\cap F)=(i,i,F),
\end{equation*}
а  отже, $(i,i,F)\in E(\boldsymbol{B}_{\omega}^{\mathscr{F}})$.

$(\Longrightarrow)$ Припустимо, що $(i,j,F)\in E(\boldsymbol{B}_{\omega}^{\mathscr{F}})$ та $i\geqslant j$. Тоді
\begin{equation*}
  (i,j,F)\cdot(i,j,F)=(i-j+i,j,(j-i+F)\cap F)=(i,j,F),
\end{equation*}
а отже, $i-j+i=i$, звідки випливає, що $i=j$. У випадку $i\leqslant j$ аналогічно отримуємо, що $i=j$.
\end{proof}

\begin{lemma}\label{lemma-3.3}
Ідемпотенти напівгрупи $\boldsymbol{B}_{\omega}^{\mathscr{F}}$ комутують.
\end{lemma}

\begin{proof}
Очевидно, якщо напівгрупа $\boldsymbol{B}_{\omega}^{\mathscr{F}}$ містить нуль $\boldsymbol{0}$, то елемент $\boldsymbol{0}$ комутує з кожним елементом напівгрупи $\boldsymbol{B}_{\omega}^{\mathscr{F}}$.

Зафіксуємо два довільні ненульові ідемпотенти напівгрупи $\boldsymbol{B}_{\omega}^{\mathscr{F}},$ які за лемами \ref{lemma-3.1} і \ref{lemma-3.2} мають зображення $(i,i,F_i)$ i $(j,j,F_j)$, де $i,j\in\omega$ i $F_1, F_2\in \mathscr{F}$ --- непорожні підмножини в $\omega$. Тоді
\begin{equation*}
\begin{split}
  (i,i,F_i)\cdot(j,j,F_j)&=\left\{
    \begin{array}{ll}
      (i-i+j,j,(i-j+F_i)\cap F_j), & \hbox{якщо~} i\leqslant j;\\
      (i,i-j+j,F_i\cap (j-i+F_j)), & \hbox{якщо~} i\geqslant j
    \end{array}
  \right.= \\
    & =\left\{
    \begin{array}{ll}
      (j,j,(i-j+F_i)\cap F_j), & \hbox{якщо~} i\leqslant j;\\
      (i,i,F_i\cap (j-i+F_j)), & \hbox{якщо~} i\geqslant j
    \end{array}
  \right.
\end{split}
\end{equation*}
i
\begin{equation*}
\begin{split}
  (j,j,F_i)\cdot(i,i,F_j)&=\left\{
    \begin{array}{ll}
      (j,j-i+i,F_j\cap (i-j+F_i)), & \hbox{якщо~} i\leqslant j;\\
      (i-i+j,j,(j-i+F_j)\cap F_i), & \hbox{якщо~} i\geqslant j
    \end{array}
  \right.= \\
    & =\left\{
    \begin{array}{ll}
      (j,j,(i-j+F_i)\cap F_j), & \hbox{якщо~} i\leqslant j;\\
      (i,i,F_i\cap (j-i+F_j)), & \hbox{якщо~} i\geqslant j,
    \end{array}
  \right.
\end{split}
\end{equation*}
звідки
\begin{equation*}
(i,i,F_i)\cdot(j,j,F_j)=(j,j,F_i)\cdot(i,i,F_j),
\end{equation*}
а отже, справджується твердження леми.
\end{proof}

\begin{lemma}\label{lemma-3.4}
Елементи $(i,j,F)$ і $(j,i,F)$ є інверсними в напівгрупі $\boldsymbol{B}_{\omega}^{\mathscr{F}}$.
\end{lemma}

\begin{proof}
Справді, маємо
\begin{equation*}
(i,j,F)\cdot(j,i,F)\cdot(i,j,F)=(i,i,F)\cdot(i,j,F)=(i,j,F)
\end{equation*}
і
\begin{equation*}
(j,i,F)\cdot(i,j,F)\cdot(j,i,F)=(j,j,F)\cdot(j,i,F)=(j,i,F),
\end{equation*}
звідки й випливає твердження леми.
\end{proof}

За теоремою Вагнера--Престона (див. \cite[теорема~1.17]{Clifford-Preston-1961}) регулярна напівгрупа є інверсною тоді і лише тоді, коли ідемпотенти в ній комутують, а отже, з лем \ref{lemma-3.3} і \ref{lemma-3.4} випливає теорема \ref{theorem-3.5}.

\begin{theorem}\label{theorem-3.5}
Якщо $\mathscr{F}$ --- ${\omega}$-замкнена підсім'я в  $\mathscr{P}(\omega)$, то $\boldsymbol{B}_{\omega}^{\mathscr{F}}$ --- інверсна напівгрупа.
\end{theorem}

Лема \ref{lemma-3.6} описує природний частковий порядок на множині ідемпотентів напівгрупи $\boldsymbol{B}_{\omega}^{\mathscr{F}}$.

\begin{lemma}\label{lemma-3.6}
$(i,i,F_i)\preccurlyeq(j,j,F_j)$ в $E(\boldsymbol{B}_{\omega}^{\mathscr{F}})$ тоді і тільки тоді, коли
\begin{equation*}
i\geqslant j \qquad \hbox{i} \qquad F_i\subseteq j-i+F_j.
\end{equation*}
\end{lemma}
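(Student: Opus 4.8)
The plan is to apply the standard description of the natural partial order on idempotents: for $e,f\in E(\boldsymbol{B}_{\omega}^{\mathscr{F}})$ we have $e\preccurlyeq f$ if and only if $ef=fe=e$. Since $\boldsymbol{B}_{\omega}^{\mathscr{F}}$ is an inverse semigroup by Theorem~\ref{theorem-3.5} and its band of idempotents is commutative by Lemma~\ref{lemma-3.3}, this condition is equivalent to the single equality $ef=e$. By Lemma~\ref{lemma-3.2} the idempotents in question are exactly the (classes of) triples $(i,i,F_i)$, $(j,j,F_j)$ with $i,j\in\omega$ and $F_i,F_j\in\mathscr{F}$, which are nonempty whenever $\varnothing\in\mathscr{F}$ because in that case $(i,i,\varnothing)$ lies in $\boldsymbol{I}$ and hence equals $\boldsymbol{0}$. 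So the whole proof reduces to evaluating the product $(i,i,F_i)\cdot(j,j,F_j)$.

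First I would compute this product straight from the multiplication rule on $\boldsymbol{B}_{\omega}\times\mathscr{F}$, exactly as was already done inside the proof of Lemma~\ref{lemma-3.3}: if $i\leqslant j$ then $(i,i,F_i)\cdot(j,j,F_j)=(j,j,(i-j+F_i)\cap F_j)$, while if $i\geqslant j$ then $(i,i,F_i)\cdot(j,j,F_j)=(i,i,F_i\cap(j-i+F_j))$.

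For the sufficiency direction, assume $i\geqslant j$ and $F_i\subseteq j-i+F_j$. Then $F_i\cap(j-i+F_j)=F_i$, so the second case of the computation gives $(i,i,F_i)\cdot(j,j,F_j)=(i,i,F_i)$; combined with commutativity of idempotents this yields $(i,i,F_i)\preccurlyeq(j,j,F_j)$. For the necessity direction, assume $(i,i,F_i)\preccurlyeq(j,j,F_j)$, i.e. $(i,i,F_i)\cdot(j,j,F_j)=(i,i,F_i)$. If $i<j$, the product equals $(j,j,(i-j+F_i)\cap F_j)$, which is either the zero of $\boldsymbol{B}_{\omega}^{\mathscr{F}}$ (impossible, since $(i,i,F_i)$ is not the zero) or a genuine triple with first coordinate $j\neq i$; in either case it cannot equal $(i,i,F_i)$, a contradiction. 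Hence $i\geqslant j$, and then $(i,i,F_i)=(i,i,F_i\cap(j-i+F_j))$ forces $F_i=F_i\cap(j-i+F_j)$, that is $F_i\subseteq j-i+F_j$.

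The arguments are elementary, so there is no serious obstacle; the only points requiring a little care are the reduction of $ef=fe=e$ to $ef=e$ via Lemma~\ref{lemma-3.3}, and, in the case $\varnothing\in\mathscr{F}$, keeping track of the identification of triples with empty third coordinate with the zero of $\boldsymbol{B}_{\omega}^{\mathscr{F}}$ — which is precisely what excludes the possibility $i<j$ and forces the third-coordinate inclusion rather than mere nonempty intersection.
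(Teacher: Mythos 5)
Your proposal is correct and follows essentially the same route as the paper's own proof: reduce $(i,i,F_i)\preccurlyeq(j,j,F_j)$ to the single equality $(i,i,F_i)\cdot(j,j,F_j)=(i,i,F_i)$ via commutativity of idempotents (Lemma~\ref{lemma-3.3}), evaluate the product by the defining multiplication in both cases $i\leqslant j$ and $i\geqslant j$, and compare coordinates to extract $i\geqslant j$ and $F_i\subseteq j-i+F_j$. Your extra care with the identification of triples having empty third coordinate with the zero is a harmless refinement of the same argument, not a different method.
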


\begin{proof}
Припустимо, що $(i,i,F_i)\preccurlyeq(j,j,F_j)$ в $E(\boldsymbol{B}_{\omega}^{\mathscr{F}})$. Тоді
\begin{equation*}
(i,i,F_i)\cdot(j,j,F_j)=(i,i,F_i),
\end{equation*}
і оскільки   \begin{align*}
  (i,i,F_i)\cdot(j,j,F_j)&
=
  \left\{
    \begin{array}{ll}
      (i-i+j,j,(i-j+F_i)\cap F_j), & \hbox{якщо~} i\leqslant j;\\
      (i,i-j+j,F_i\cap (j-i+F_j)), & \hbox{якщо~} i\geqslant j
    \end{array}
  \right.=\\
&
=
  \left\{
    \begin{array}{ll}
      (j,j,(i-j+F_i)\cap F_j), & \hbox{якщо~} i\leqslant j;\\
      (i,i,F_i\cap (j-i+F_j)), & \hbox{якщо~} i\geqslant j,
    \end{array}
  \right.
\end{align*}
то з леми \ref{lemma-3.3} випливає, що $i\geqslant j$ i $F_i\subseteq j-i+F_j$.

Навпаки, припустимо, що $i\geqslant j$ i $F_i\subseteq j-i+F_j$. Тоді
\begin{align*}
  (i,i,F_i)\cdot(j,j,F_j)&=(i,i-j+j,F_i\cap (j-i+F_j))=\\
  &=(i,i,F_i\cap (j-i+F_j))=\\
  &=(i,i,F_i),
\end{align*}
а отже, $(i,i,F_i)\preccurlyeq(j,j,F_j)$ в $E(\boldsymbol{B}_{\omega}^{\mathscr{F}})$.
\end{proof}

Твердження \ref{proposition-3.8} описує природний частковий порядок на напівгрупі $\boldsymbol{B}_{\omega}^{\mathscr{F}}$.

\begin{proposition}\label{proposition-3.8}
Нехай $(i_1,j_1,F_1)$ i $(i_2,j_2,F_2)$~--- ненульові елементи напівгрупи $\boldsymbol{B}_{\omega}^{\mathscr{F}}$. Тоді $(i_1,j_1,F_1)\preccurlyeq(i_2,j_2,F_2)$ тоді і лише тоді, коли $F_1\subseteq -k+F_2$ й $i_1-i_2=j_1-j_2=k$ для деякого $k\in\omega$.
\end{proposition}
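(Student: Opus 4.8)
The plan is to use the definition of the natural partial order directly: in an inverse semigroup $a\preccurlyeq b$ holds precisely when $a=be$ for some idempotent $e$, and by Lemma~\ref{lemma-3.2} every idempotent of $\boldsymbol{B}_{\omega}^{\mathscr{F}}$ has the form $(e,e,F_e)$ with $e\in\omega$ and $F_e\in\mathscr{F}$. So the whole statement reduces to expanding a product of the shape $(i_2,j_2,F_2)\cdot(e,e,F_e)$ by the multiplication rule and bookkeeping the index shifts.

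For necessity, I would start from an equality $(i_1,j_1,F_1)=(i_2,j_2,F_2)\cdot(e,e,F_e)$ and split into the two branches of the product. If $j_2\leqslant e$, then $(i_2,j_2,F_2)\cdot(e,e,F_e)=(i_2-j_2+e,\,e,\,(j_2-e+F_2)\cap F_e)$, and comparing the three coordinates with $(i_1,j_1,F_1)$ gives $j_1=e$, hence $k:=i_1-i_2=e-j_2=j_1-j_2\geqslant 0$ and $F_1=(-k+F_2)\cap F_e\subseteq -k+F_2$. If $j_2\geqslant e$, then the product equals $(i_2,\,j_2,\,F_2\cap(e-j_2+F_e))$, which forces $i_1=i_2$, $j_1=j_2$, i.e. $k=0$, and $F_1\subseteq F_2=-k+F_2$. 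In either case the asserted conditions hold.

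For sufficiency, assume $i_1-i_2=j_1-j_2=k\in\omega$ and $F_1\subseteq -k+F_2$. Since $(i_1,j_1,F_1)$ is a nonzero element we have $\varnothing\neq F_1\in\mathscr{F}$, so by Lemma~\ref{lemma-3.2} the triple $(j_1,j_1,F_1)$ is a (nonzero) idempotent of $\boldsymbol{B}_{\omega}^{\mathscr{F}}$. As $k\geqslant 0$ we have $j_2\leqslant j_1$, so the multiplication rule gives
\begin{equation*}
(i_2,j_2,F_2)\cdot(j_1,j_1,F_1)=(i_2-j_2+j_1,\,j_1,\,(j_2-j_1+F_2)\cap F_1)=(i_1,\,j_1,\,(-k+F_2)\cap F_1),
\end{equation*}
and the inclusion $F_1\subseteq -k+F_2$ yields $(-k+F_2)\cap F_1=F_1$. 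Hence $(i_2,j_2,F_2)\cdot(j_1,j_1,F_1)=(i_1,j_1,F_1)$, that is, $(i_1,j_1,F_1)\preccurlyeq(i_2,j_2,F_2)$.

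All the computations are elementary; the only point that needs care is that the hypothesis $k\geqslant 0$ (implicit in ``$k\in\omega$'') is exactly what selects the branch $j_2\leqslant e$ of the product — the branch that shifts the first coordinate and leaves the set-part in the tractable form $(-k+F_2)\cap F_e$ — and that for the converse one is free to choose $e=j_1$ and $F_e=F_1$. The convention $n-m+\varnothing=\varnothing$ plays no role here since every triple involved is nonzero.
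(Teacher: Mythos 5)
Your argument is correct, and it arrives at the same coordinate computations as the paper, but by a slightly different route. The paper proves the proposition via Lemma~1.4.6 of \cite{Lawson-1998}: for sufficiency it verifies $(i_1,j_1,F_1)=(i_1,j_1,F_1)(i_1,j_1,F_1)^{-1}(i_2,j_2,F_2)$, and for necessity it expands \emph{both} products $(i_1,i_1,F_1)\cdot(i_2,j_2,F_2)$ and $(i_2,j_2,F_2)\cdot(j_1,j_1,F_1)$ coming from $a=aa^{-1}b$ and $a=ba^{-1}a$, then combines the resulting cases. You instead work directly from the definition of the natural partial order given in the paper, $s\preccurlyeq t$ iff $s=te$ with $e$ idempotent: for necessity you take an arbitrary idempotent $(e,e,F_e)$ (Lemma~\ref{lemma-3.2}) and read off $k$ and the inclusion from the two branches of a single product, and for sufficiency you exhibit the concrete idempotent $(j_1,j_1,F_1)$ on the right. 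This is marginally more elementary (no appeal to Lawson's characterization, and no need for the left/right symmetry $a=aa^{-1}b$ hidden in it) and needs only one two-branch case analysis; the paper's version has the advantage of reusing the $aa^{-1}b$ machinery that appears elsewhere in the text. One small point you should make explicit: when $\varnothing\in\mathscr{F}$ the semigroup has a zero, which is an idempotent not covered by your parametrization $(e,e,F_e)$ with $F_e\neq\varnothing$; but if $e$ were the zero (or if $F_e=\varnothing$), then $(i_2,j_2,F_2)\cdot e$ would be zero, contradicting that $(i_1,j_1,F_1)$ is a nonzero element, so that case cannot occur and your case analysis is indeed exhaustive.
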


\begin{proof}
$(\Longleftarrow)$ Якщо $F_1\subseteq- k+F_2$, $i_1-i_2=k$ i $j_1-j_2=k$ для деякого $k\in\omega$, то
\begin{align*}
  (i_1,j_1,F_1)\cdot(i_1,j_1,F_1)^{-1}\cdot(i_2,j_2,F_2)&=(i_1,j_1,F_1)\cdot(j_1,i_1,F_1)\cdot(i_2,j_2,F_2)= \\
   &=(i_1,i_1,F_1)\cdot(i_2,j_2,F_2)=\\
   &=(i_1,i_1-i_2+j_2,F_1\cap (i_2-i_1+F_2))=\\
   &=(i_1,j_1-j_2+j_2,F_1\cap (-k+F_2))=\\
   &=(i_1,j_1,F_1\cap (-k+F_2))=\\
   &=(i_1,j_1,F_1),
\end{align*}
і за лемою~1.4.6 \cite{Lawson-1998} отримуємо, що $(i_1,j_1,F_1)\preccurlyeq(i_2,j_2,F_2)$.

\smallskip
$(\Longrightarrow)$ Припустимо, що $(i_1,j_1,F_1)\preccurlyeq(i_2,j_2,F_2)$. Тоді за лемою~1.4.6 \cite{Lawson-1998},
\begin{align*}
  (i_1,j_1,F_1)&=(i_1,j_1,F_1)\cdot(i_1,j_1,F_1)^{-1}\cdot(i_2,j_2,F_2)=\\
   &=(i_1,j_1,F_1)\cdot(j_1,i_1,F_1)\cdot(i_2,j_2,F_2)= \\
   &=(i_1,i_1,F_1)\cdot(i_2,j_2,F_2)=\\
   &=\left\{
    \begin{array}{ll}
      (i_1-i_1+i_2,j_2,(i_1-i_2+F_1)\cap F_2), & \hbox{якщо~} i_1\leqslant i_2;\\
      (i_1,i_1-i_2+j_2,F_1\cap (i_2-i_1+F_2)), & \hbox{якщо~} i_1\geqslant i_2
    \end{array}
  \right.  =\\
   &=\left\{
    \begin{array}{llllr}
      (i_2,j_2,(i_1-i_2+F_1)\cap F_2), & \hbox{якщо~} i_1\leqslant i_2;        &&&(1_3)\\
      (i_1,i_1-i_2+j_2,F_1\cap (i_2-i_1+F_2)), & \hbox{якщо~} i_1\geqslant i_2 &&&(2_3)
    \end{array}
  \right.
\end{align*}
i
\begin{align*}
  (i_1,j_1,F_1)&=(i_2,j_2,F_2)\cdot(i_1,j_1,F_1)^{-1}\cdot(i_1,j_1,F_1)=\\
   &=(i_2,j_2,F_2)\cdot(j_1,i_1,F_1)\cdot(i_1,j_1,F_1)= \\
   &=(i_2,j_2,F_2)\cdot(j_1,j_1,F_1)=\\
   &=\left\{
    \begin{array}{ll}
      (i_2,j_2-j_1+j_1,F_2\cap (j_1-j_2+F_1)), & \hbox{якщо~} j_1\leqslant j_2;\\
      (i_2-j_2+j_1,j_1,(j_2-j_1+F_2)\cap F_1), & \hbox{якщо~} j_1\geqslant j_2
    \end{array}
  \right.  =\\
   &=\left\{
    \begin{array}{llllr}
      (i_2,j_2,F_2\cap (j_1-j_2+F_1)),         & \hbox{якщо~} j_1\leqslant j_2;        &&&(1_4)\\
      (i_2-j_2+j_1,j_1,(j_2-j_1+F_2)\cap F_1), & \hbox{якщо~} j_1\geqslant j_2         &&&(2_4)
    \end{array}
  \right.
\end{align*}
У випадках $(1_3)$ i $(1_4)$ маємо, що $i_1=i_2$, $j_1=j_2$, а отже, $i_1-i_2=j_1-j_2=0$ i $F_1\subseteq -0+F_2$. У випадках $(2_3)$ i $(2_4)$ одержуємо, що $i_1=i_2-j_2+j_1$, і врахувавши, що $i_1\geqslant i_2$, $j_1\geqslant j_2$ і $(j_2-j_1+F_2)\cap F_1=F_1$, та прийнявши $k=i_1-i_2=j_1-j_2$, отримуємо, що $F_1\subseteq -k+F_2$ для деякого $k\in\omega$.
\end{proof}

Теорема~\ref{theorem-3.7} описує відношення Ґріна на напівгрупі $\boldsymbol{B}_{\omega}^{\mathscr{F}}$.

\begin{theorem}\label{theorem-3.7}
Нехай $\mathscr{F}$ --- ${\omega}$-замкнена підсім'я в  $\mathscr{P}(\omega)$. Тоді:
\begin{itemize}
  \item[$\boldsymbol{(i)}$] $(i_1,j_1,F_1)\mathscr{R}(i_2,j_2,F_2)$ в $\boldsymbol{B}_{\omega}^{\mathscr{F}}$ тоді і лише тоді, коли $i_1=i_2$ i $F_1=F_2$;
  \item[$\boldsymbol{(ii)}$] $(i_1,j_1,F_1)\mathscr{L}(i_2,j_2,F_2)$ в $\boldsymbol{B}_{\omega}^{\mathscr{F}}$ тоді і лише тоді, коли $j_1=j_2$ i $F_1=F_2$;
  \item[$\boldsymbol{(iii)}$] $(i_1,j_1,F_1)\mathscr{H}(i_2,j_2,F_2)$ в $\boldsymbol{B}_{\omega}^{\mathscr{F}}$ тоді і лише тоді, коли $i_1=i_2$, $j_1=j_2$ i $F_1=F_2$, а отже, всі $\mathscr{H}$-класи напівгрупи $\boldsymbol{B}_{\omega}^{\mathscr{F}}$ є одноелементними;
  \item[$\boldsymbol{(iv)}$] $(i_1,j_1,F_1)\mathscr{D}(i_2,j_2,F_2)$ в $\boldsymbol{B}_{\omega}^{\mathscr{F}}$ тоді і лише тоді, коли  $F_1=F_2$;
  \item[$\boldsymbol{(v)}$] $(i_1,j_1,F_1)\mathscr{J}(i_2,j_2,F_2)$ в $\boldsymbol{B}_{\omega}^{\mathscr{F}}$ тоді і лише тоді, коли  існують $k_1,k_2\in\omega$ такі, що $F_1\subseteq -k_1+F_2$ і $F_2\subseteq -k_2+F_1$.
\end{itemize}
\end{theorem}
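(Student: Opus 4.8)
The plan is to use the fact, just established in Theorem~\ref{theorem-3.5}, that $\boldsymbol{B}_{\omega}^{\mathscr{F}}$ is an inverse semigroup, together with the standard descriptions of Green's relations in inverse semigroups (see \cite{Lawson-1998}): for $a,b$ in an inverse semigroup $S$ one has $a\,\mathscr{R}\,b\iff aa^{-1}=bb^{-1}$, $a\,\mathscr{L}\,b\iff a^{-1}a=b^{-1}b$, $\mathscr{H}=\mathscr{R}\cap\mathscr{L}$, $\mathscr{D}=\mathscr{R}\circ\mathscr{L}$, and (since $a=aa^{-1}a\in S^1(aa^{-1})S^1$ and $aa^{-1}=a\cdot a^{-1}\in S^1aS^1$) $S^1aS^1=S^1(aa^{-1})S^1$, so that $a\,\mathscr{J}\,b\iff aa^{-1}\,\mathscr{J}\,bb^{-1}$ reduces $\mathscr{J}$ to the case of idempotents. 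By Lemma~\ref{lemma-3.4} the inverse of $(i,j,F)$ is $(j,i,F)$, and applying the multiplication gives
\begin{equation*}
(i,j,F)\cdot(j,i,F)=(i,i,F), \qquad (j,i,F)\cdot(i,j,F)=(j,j,F).
\end{equation*}
When $\varnothing\in\mathscr{F}$ the singleton $\{\boldsymbol{0}\}$ is a class of each of the five relations and $\boldsymbol{0}$ lies in every principal ideal, so we may assume throughout that the elements in sight are non-zero, i.e.\ all $F$'s are non-empty.

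From the two products above, $(i_1,j_1,F_1)\,\mathscr{R}\,(i_2,j_2,F_2)$ iff $(i_1,i_1,F_1)=(i_2,i_2,F_2)$ iff $i_1=i_2$ and $F_1=F_2$, which is $\boldsymbol{(i)}$, and dually $(i_1,j_1,F_1)\,\mathscr{L}\,(i_2,j_2,F_2)$ iff $j_1=j_2$ and $F_1=F_2$, which is $\boldsymbol{(ii)}$; intersecting gives $\mathscr{H}=\mathrm{id}$, so every $\mathscr{H}$-class is a singleton, which is $\boldsymbol{(iii)}$ (and re-proves that $\boldsymbol{B}_{\omega}^{\mathscr{F}}$ is combinatorial). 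For $\boldsymbol{(iv)}$: if $F_1=F_2=:F$, then by $\boldsymbol{(i)}$ and $\boldsymbol{(ii)}$ one has $(i_1,j_1,F)\,\mathscr{R}\,(i_1,j_2,F)\,\mathscr{L}\,(i_2,j_2,F)$, so $(i_1,j_1,F_1)\,\mathscr{D}\,(i_2,j_2,F_2)$; conversely, if $(i_1,j_1,F_1)\,\mathscr{D}\,(i_2,j_2,F_2)$ and $c$ satisfies $(i_1,j_1,F_1)\,\mathscr{R}\,c\,\mathscr{L}\,(i_2,j_2,F_2)$, then by $\boldsymbol{(i)}$ the third coordinate of $c$ is $F_1$ and by $\boldsymbol{(ii)}$ it is $F_2$, forcing $F_1=F_2$.

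It remains to prove $\boldsymbol{(v)}$. By the reduction above it suffices to decide when, for the idempotents $e=(i_1,i_1,F_1)$, $f=(i_2,i_2,F_2)$ and $S=\boldsymbol{B}_{\omega}^{\mathscr{F}}$, one has both $e\in S^1fS^1$ and $f\in S^1eS^1$. The crucial auxiliary fact, valid in any inverse semigroup, is: \emph{for idempotents $e,f$ one has $e\in S^1fS^1$ if and only if there is an idempotent $e'$ with $e\,\mathscr{D}\,e'\preccurlyeq f$.} Indeed, if $e'\preccurlyeq f$ and $aa^{-1}=e$, $a^{-1}a=e'$, then $a=ae'=ae'f=af$, whence $e=aa^{-1}=afa^{-1}\in S^1fS^1$; conversely, if $e=xfy$ with $x,y\in S^1$, then from $e=e^{-1}=y^{-1}fx^{-1}$ we get $e=ee=xf(yy^{-1})fx^{-1}=xgx^{-1}$ with $g:=fyy^{-1}f\preccurlyeq f$, and putting $a:=xg$ yields $aa^{-1}=xgx^{-1}=e$ and $a^{-1}a=gx^{-1}xg\preccurlyeq g\preccurlyeq f$, so $e':=a^{-1}a$ works. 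Applying this: by $\boldsymbol{(iv)}$ an idempotent $e'$ with $e'\,\mathscr{D}\,e$ must have the form $(p,p,F_1)$, and by Lemma~\ref{lemma-3.6} $(p,p,F_1)\preccurlyeq(i_2,i_2,F_2)$ iff $p\geqslant i_2$ and $F_1\subseteq i_2-p+F_2$; writing $k_1=p-i_2\in\omega$, such a $p$ exists exactly when $F_1\subseteq -k_1+F_2$ for some $k_1\in\omega$. Hence $e\in S^1fS^1$ iff $F_1\subseteq -k_1+F_2$ for some $k_1\in\omega$, and symmetrically $f\in S^1eS^1$ iff $F_2\subseteq -k_2+F_1$ for some $k_2\in\omega$; conjoining the two gives $\boldsymbol{(v)}$.

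The routine parts are the multiplication checks for $aa^{-1}$ and $a^{-1}a$ and the verifications that the various idempotent products sit $\preccurlyeq$-below the asserted elements. I expect the only genuinely non-mechanical point to be the auxiliary description of $e\in S^1fS^1$ for idempotents $e,f$ used in $\boldsymbol{(v)}$; once it is available, Lemma~\ref{lemma-3.6} and item $\boldsymbol{(iv)}$ pin down precisely the conditions in $\boldsymbol{(v)}$. The passage to the quotient by $\boldsymbol{I}$ (when $\varnothing\in\mathscr{F}$) is harmless, since $\boldsymbol{0}$ absorbs into every principal ideal and forms a one-element class of each Green relation.
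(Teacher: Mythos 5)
Your argument is correct and follows essentially the same route as the paper: the relations $\mathscr{R}$, $\mathscr{L}$, $\mathscr{H}$, $\mathscr{D}$ are read off from the idempotents $aa^{-1}=(i,i,F)$ and $a^{-1}a=(j,j,F)$ of the inverse semigroup $\boldsymbol{B}_{\omega}^{\mathscr{F}}$, and $\mathscr{J}$ is settled by the criterion that two idempotents are $\mathscr{J}$-equivalent iff each is $\mathscr{D}$-equivalent to an idempotent lying $\preccurlyeq$-below the other, combined with the description of the natural partial order. The only deviations are minor: you prove that criterion yourself instead of citing Proposition~3.2.8 of \cite{Lawson-1998}, you use Lemma~\ref{lemma-3.6} where the paper invokes Proposition~\ref{proposition-3.8}, and you obtain item $\boldsymbol{(iv)}$ directly from $\mathscr{D}=\mathscr{R}\circ\mathscr{L}$ rather than via Munn's lemma, all of which is sound.
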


\begin{proof}
$\boldsymbol{(i)}$  Нехай $(i_1,j_1,F_1)$ i $(i_2,j_2,F_2)$~--- $\mathscr{R}$-еквівалентні елементи напівгрупи $\boldsymbol{B}_{\omega}^{\mathscr{F}}$ такі, що
$(i_1,j_1,F_1)\mathscr{R}(i_2,j_2,F_2)$. Позаяк за теоремою \ref{theorem-3.5}, $\boldsymbol{B}_{\omega}^{\mathscr{F}}$~--- інверсна напівгрупа та $$(i_1,j_1,F_1)\boldsymbol{B}_{\omega}^{\mathscr{F}}=(i_2,j_2,F_2)\boldsymbol{B}_{\omega}^{\mathscr{F}},$$ то з теореми~1.17
\cite{Clifford-Preston-1961} ви\-пли\-ває, що
\begin{equation*}
  (i_1,j_1,F_1)\boldsymbol{B}_{\omega}^{\mathscr{F}}=(i_1,j_1,F_1)(i_1,j_1,F_1)^{-1}\boldsymbol{B}_{\omega}^{\mathscr{F}}= (i_1,i_1,F_1)\boldsymbol{B}_{\omega}^{\mathscr{F}}
\end{equation*}
і
\begin{equation*}
  (i_2,j_2,F_2)\boldsymbol{B}_{\omega}^{\mathscr{F}}=(i_2,j_2,F_2)(i_2,j_2,F_2)^{-1}\boldsymbol{B}_{\omega}^{\mathscr{F}}= (i_2,i_2,F_2)\boldsymbol{B}_{\omega}^{\mathscr{F}},
\end{equation*}
а отже, $(i_1,i_1,F_1)=(i_2,i_2,F_2)$. Отож виконуються рівності $i_1=i_2$ i $F_1=F_2$.

Навпаки, нехай $(i_1,j_1,F_1)$ i $(i_2,j_2,F_2)$~--- елементи напівгрупи $\boldsymbol{B}_{\omega}^{\mathscr{F}}$ такі, що $i_1=i_2$ i $F_1=F_2$. Тоді $(i_1,i_1,F_1)=(i_2,i_2,F_2)$. Позаяк $\boldsymbol{B}_{\omega}^{\mathscr{F}}$~--- інверсна напівгрупа, то з теореми~1.17
\cite{Clifford-Preston-1961} випливає, що
\begin{align*}
  (i_1,j_1,F_1)\boldsymbol{B}_{\omega}^{\mathscr{F}}&=(i_1,j_1,F_1)(i_1,j_1,F_1)^{-1}\boldsymbol{B}_{\omega}^{\mathscr{F}}=\\
   &=(i_1,i_1,F_1)\boldsymbol{B}_{\omega}^{\mathscr{F}}=\\
   &=(i_2,j_2,F_2)(i_2,j_2,F_2)^{-1}\boldsymbol{B}_{\omega}^{\mathscr{F}}=\\
   &=(i_2,i_2,F_2)\boldsymbol{B}_{\omega}^{\mathscr{F}},
\end{align*}
а отже, $(i_1,j_1,F_1)\mathscr{R}(i_2,j_2,F_2)$ в $\boldsymbol{B}_{\omega}^{\mathscr{F}}$.

\smallskip
Доведення твердження $\boldsymbol{(ii)}$ аналогічне до доведення твердження $\boldsymbol{(i)}$.

\smallskip
Твердження  $\boldsymbol{(iii)}$ випливає з $\boldsymbol{(i)}$ та $\boldsymbol{(ii)}$.

\smallskip
$\boldsymbol{(iv)}$ Нехай $(i_1,j_1,F_1)$ i $(i_2,j_2,F_2)$~--- елементи напівгрупи $\boldsymbol{B}_{\omega}^{\mathscr{F}}$ такі, що \linebreak $(i_1,j_1,F_1)\mathscr{D}(i_2,j_2,F_2)$. Позаяк $\mathscr{R},\mathscr{L}\subseteq\mathscr{D}$  та
\begin{equation*}
(i_1,i_1,F_1)\mathscr{R}(i_1,j_1,F_1)\qquad  \hbox{i} \qquad (i_2,j_2,F_2)\mathscr{L}(j_2,j_2,F_2)
\end{equation*}
за твердженнями $\boldsymbol{(i)}$ і $\boldsymbol{(ii)}$, то $(i_1,i_1,F_1)\mathscr{D}(j_2,j_2,F_2)$. За лемою Манна (див. \cite[лема 1.1]{Munn-1966} або \cite[твердження 3.2.5]{Lawson-1998}) існує елемент $(i,j,F)\in\boldsymbol{B}_{\omega}^{\mathscr{F}}$ такий, що
\begin{equation*}
(i,j,F)\cdot(i,j,F)^{-1}=(i_1,i_1,F_1)\qquad \hbox{i}\qquad (i,j,F)^{-1}\cdot(i,j,F)=(j_2,j_2,F_2).
\end{equation*}
Але за лемою \ref{lemma-3.4} маємо, що
\begin{equation*}
(i,j,F)\cdot(i,j,F)^{-1}=(i,j,F)\cdot(j,i,F)=(i,i,F)
\end{equation*}
i
\begin{equation*}
 (i,j,F)^{-1}\cdot(i,j,F)=(j,i,F)\cdot(i,j,F)=(j,j,F),
\end{equation*}
а отже, $F=F_1=F_2$.

Нехай $i_1,i_2,j_1,j_2$~--- довільні невід'ємні цілі числа та $F\in\mathscr{F}$. За твердженнями $\boldsymbol{(i)}$ і $\boldsymbol{(ii)}$ отримуємо, що $(i_1,i_1,F)\mathscr{R}(i_1,j_1,F)$ i $(i_2,j_2,F)\mathscr{L}(j_2,j_2,F)$, а з леми \ref{lemma-3.2} випливає, що $(i_1,i_1,F),(i_2,j_2,F)\in E(\boldsymbol{B}_{\omega}^{\mathscr{F}})$. Позаяк
\begin{equation*}
(i_1,j_2,F)\cdot(i_1,j_2,F)^{-1}=(i_1,j_2,F)\cdot(j_2,i_1,F)=(i_1,i_1,F)
\end{equation*}
і
\begin{equation*}
(i_1,j_2,F)^{-1}\cdot(i_1,j_2,F)=(j_2,i_1,F)\cdot(i_1,j_2,F)=(j_2,j_2,F),
\end{equation*}
то за лемою Манна (див. \cite[лема 1.1]{Munn-1966} або \cite[твердження 3.2.5]{Lawson-1998}) отримуємо, що $(i_1,i_1,F)\mathscr{D}(j_2,j_2,F)$ в $\boldsymbol{B}_{\omega}^{\mathscr{F}}$, і позаяк $\mathscr{R}\circ\mathscr{D}\circ\mathscr{L}\subseteq \mathscr{D}$, то $(i_1,j_1,F)\mathscr{D}(i_2,j_2,F)$ в $\boldsymbol{B}_{\omega}^{\mathscr{F}}$.

\smallskip
$\boldsymbol{(v)}$ Зауважимо, оскільки $\mathscr{D}\subseteq \mathscr{J}$ і за твердженням $\boldsymbol{(iv)}$ маємо, що  \linebreak $(0,0,F)\mathscr{D}(i,j,F)$ в $\boldsymbol{B}_{\omega}^{\mathscr{F}}$ для довільних $i,j\in\omega$ i $F\in\mathscr{F}$, то достатньо знайти необхідну та достатню умову, коли елементи $(0,0,F_1)$ i $(0,0,F_2)$ є $\mathscr{J}$-еквівалентні в $\boldsymbol{B}_{\omega}^{\mathscr{F}}$.

За твердженням~3.2.8 \cite{Lawson-1998} елементи $a$ i $b$ інверсної напівгрупи $S$ є $\mathscr{J}$-еквівалентні тоді і лише тоді, коли $a\mathscr{D}b'\preccurlyeq b$ i $b\mathscr{D}a'\preccurlyeq a$ для деяких $a',b'\in S$. Тоді за твердженням \ref{proposition-3.8} ідемпотенти $(0,0,F_1)$ i $(0,0,F_2)$ є $\mathscr{J}$-еквівалентні в $\boldsymbol{B}_{\omega}^{\mathscr{F}}$ тоді і лише тоді, коли існують $k_1,k_2\in\omega$ такі, що $F_1\subseteq -k_1+F_2$ і $F_2\subseteq -k_2+F_1$. Тоді з вище сказаного випливає, що $(i_1,j_1,F_1)\mathscr{J}(i_2,j_2,F_2)$ в $\boldsymbol{B}_{\omega}^{\mathscr{F}}$ тоді і лише тоді, коли існують $k_1,k_2\in\omega$ такі, що $F_1\subseteq -k_1+F_2$ і $F_2\subseteq -k_2+F_1$.
\end{proof}

Нагадаємо \cite{Lawson-1998, Ash-1979}, шо інверсна напівгрупа $S$ називається \emph{комбінаторною}, якщо відношення Ґріна $\mathscr{H}$ на $S$ є відношенням рівності.

З твердження $\boldsymbol{(iii)}$ теореми~\ref{theorem-3.7} випливає наслідок \ref{corollary-1111}.

\begin{corollary}\label{corollary-1111}
Якщо $\mathscr{F}$ --- ${\omega}$-замкнена підсім'я в  $\mathscr{P}(\omega)$, то $\boldsymbol{B}_{\omega}^{\mathscr{F}}$ --- комбінаторна інверсна напівгрупа.
\end{corollary}

Також з твердження $\boldsymbol{(v)}$ теореми~\ref{theorem-3.7} випливає наслідок \ref{corollary-1112}.

\begin{corollary}\label{corollary-1112}
Нехай $\mathscr{F}$ --- ${\omega}$-замкнена підсім'я в  $\mathscr{P}(\omega)$ і $\varnothing\notin \mathscr{F}$. Тоді напівгрупа $\boldsymbol{B}_{\omega}^{\mathscr{F}}$ є простою тоді і лише тоді, коли для довільних $F_1,F_2\in \mathscr{F}$ існують $k_1,k_2\in\omega$ такі, що
\begin{equation*}
F_1\subseteq -k_1+F_2 \qquad \hbox{і} \qquad F_2\subseteq -k_2+F_1.
\end{equation*}
\end{corollary}

Нагадаємо \cite{Clifford-Preston-1961}, шо напівгрупа $S$ з нулем $0$ називається \emph{$0$-простою}, якщо $S\cdot S\neq \{0\}$ i $\{0\}$ --- єдиний власний двобічний ідеал в $S$. Добре відомо (див. \cite[лема~2.28]{Clifford-Preston-1961}), що напівгрупа $S$ з нулем $0$ є $0$-простою тоді і лише тоді, коли $S$ має лише два $\mathscr{J}$-класи: $S\setminus\{0\}$ i $\{0\}$. З твердження $\boldsymbol{(v)}$ теореми~\ref{theorem-3.7} випливає наслідок \ref{corollary-1113}.

\begin{corollary}\label{corollary-1113}
Нехай $\mathscr{F}$ --- ${\omega}$-замкнена підсім'я в  $\mathscr{P}(\omega)$ і $\varnothing\in \mathscr{F}$. Тоді напівгрупа $\boldsymbol{B}_{\omega}^{\mathscr{F}}$ є $0$-простою тоді і лише тоді, коли для довільних непорожніх множин $F_1,F_2\in \mathscr{F}$ існують $k_1,k_2\in\omega$ такі, що
\begin{equation*}
F_1\subseteq -k_1+F_2 \qquad \hbox{і} \qquad F_2\subseteq -k_2+F_1.
\end{equation*}
\end{corollary}

Нагадаємо \cite{Saito-1965} (див. також \cite{Lawson-1998}), що інверсна напівгрупа $S$ називається \emph{$E$-унітарною}, якщо для $e\in E(S)$ i $s\in S$ з $e\preccurlyeq s$ випливає, що $s\in E(S)$. Тоді з твердження \ref{proposition-3.8} випливає теорема \ref{theorem-3.9}.

\begin{theorem}\label{theorem-3.9}
Якщо $\mathscr{F}$ --- ${\omega}$-замкнена підсім'я в  $\mathscr{P}(\omega)$ і $\varnothing\notin \mathscr{F}$, то  $\boldsymbol{B}_{\omega}^{\mathscr{F}}$~--- $E$-унітарна інверсна напівгрупа.
\end{theorem}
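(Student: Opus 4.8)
The plan is to reduce the statement to two facts already established: the description of the idempotents in Lemma~\ref{lemma-3.2} and the description of the natural partial order in Proposition~\ref{proposition-3.8}. First I would use the hypothesis $\varnothing\notin\mathscr{F}$ together with Lemma~\ref{lemma-3.1}: since then $\boldsymbol{B}_{\omega}^{\mathscr{F}}$ has no zero and literally equals $(\boldsymbol{B}_{\omega}\times\mathscr{F},\cdot)$, every element of the semigroup is a triple $(i,j,F)$ with $F\in\mathscr{F}$, $F\neq\varnothing$, so Proposition~\ref{proposition-3.8} may be applied to any pair of elements without worrying about the degenerate $\varnothing$-class. Recall also from Theorem~\ref{theorem-3.5} that $\boldsymbol{B}_{\omega}^{\mathscr{F}}$ is an inverse semigroup, which is part of what ``$E$-unitary'' requires.

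Next I would take an arbitrary idempotent $e\in E(\boldsymbol{B}_{\omega}^{\mathscr{F}})$ and an arbitrary $s\in\boldsymbol{B}_{\omega}^{\mathscr{F}}$ with $e\preccurlyeq s$, and show $s\in E(\boldsymbol{B}_{\omega}^{\mathscr{F}})$. By Lemma~\ref{lemma-3.2} the idempotent has the form $e=(i,i,F_1)$ for some $i\in\omega$ and $F_1\in\mathscr{F}$; write $s=(i_2,j_2,F_2)$. Applying Proposition~\ref{proposition-3.8} to $(i,i,F_1)\preccurlyeq(i_2,j_2,F_2)$ produces a $k\in\omega$ with $F_1\subseteq -k+F_2$ and $i-i_2=i-j_2=k$. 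The crucial (and only) step is the observation that $i-i_2=i-j_2$ forces $i_2=j_2$ (both equal $i-k$); then Lemma~\ref{lemma-3.2} applied in the other direction gives $s=(i_2,i_2,F_2)\in E(\boldsymbol{B}_{\omega}^{\mathscr{F}})$, which is exactly the definition of $E$-unitarity for $\boldsymbol{B}_{\omega}^{\mathscr{F}}$.

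I do not expect any real obstacle: the theorem is an essentially immediate consequence of Proposition~\ref{proposition-3.8}, the computational work having already been done there. The one point worth emphasizing in the write-up is the role of the hypothesis $\varnothing\notin\mathscr{F}$. If instead $\varnothing\in\mathscr{F}$, then by Lemma~\ref{lemma-3.1} the semigroup $\boldsymbol{B}_{\omega}^{\mathscr{F}}$ has a zero $\boldsymbol{0}$ (the class of $\boldsymbol{I}$); since $\boldsymbol{0}$ is idempotent and $\boldsymbol{0}=s\cdot\boldsymbol{0}$ lies below every $s$ in the natural partial order, $E$-unitarity would fail the moment $\boldsymbol{B}_{\omega}^{\mathscr{F}}$ contains a non-idempotent element. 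Thus the assumption $\varnothing\notin\mathscr{F}$ is precisely what allows the clean argument above, and it should be flagged as necessary rather than merely convenient.
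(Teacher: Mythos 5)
Your proof is correct and follows essentially the same route as the paper, which derives Theorem~\ref{theorem-3.9} directly from Proposition~\ref{proposition-3.8}: you simply spell out the intended computation that an idempotent $(i,i,F_1)\preccurlyeq(i_2,j_2,F_2)$ forces $i-i_2=i-j_2=k$, hence $i_2=j_2$ and $s\in E(\boldsymbol{B}_{\omega}^{\mathscr{F}})$ by Lemma~\ref{lemma-3.2}. Your closing remark on why $\varnothing\notin\mathscr{F}$ is needed is a correct and welcome addition, not a deviation.
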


Індуктивні підмножини в $\omega$ описує лема \ref{ind-sets}.

\begin{lemma}\label{ind-sets}
Непорожня множина $F\subseteq \omega$ є індуктивною в $\omega$ тоді і лише тоді, коли
\begin{equation*}
(-1+F)\cap F=F.
\end{equation*}
\end{lemma}

\begin{proof}
$(\Longrightarrow)$ Припустимо, що $F$ --- непорожня індуктивна множина в $\omega$ і нехай $x\in F$. Тоді $y=x+1\in F$, а отже, $x=y-1\in (-1+F)\cap F$. Звідки випливає, що $F\subseteq (-1+F)\cap F$.

\smallskip
$(\Longleftarrow)$ Нехай $(-1+F)\cap F=F$ для деякої непорожньої множини $F\subseteq \omega$. За\-фік\-су\-ємо довільний  елемент $x\in F$. Тоді існує елемент $y\in F$ такий, що $y-1=x\in F$, а отже, $x+1=y\in F$.
\end{proof}

\begin{theorem}\label{theorem-3.10}
Нехай $\mathscr{F}$ --- ${\omega}$-замкнена підсім'я в  $\mathscr{P}(\omega)$. Напівгрупа $\boldsymbol{B}_{\omega}^{\mathscr{F}}$ містить одиницю тоді і лише тоді, коли сім'я $\mathscr{F}$ містить індуктивну множину $F_0$ в $\omega$ таку, що $F_0=\bigcup\mathscr{F}$. За виконання цих умов елемент $(0,0,F_0)$ є одиницею напівгрупи $\boldsymbol{B}_{\omega}^{\mathscr{F}}$.
\end{theorem}

\begin{proof}
$(\Longrightarrow)$ Припустимо, що напівгрупа $\boldsymbol{B}_{\omega}^{\mathscr{F}}$ містить одиницю. Оскільки одиниця кожної напівгрупи є ідемпотентом, то за лемою \ref{lemma-3.2} одиниця в $\boldsymbol{B}_{\omega}^{\mathscr{F}}$ має вигляд $(i,i,F_0)$ для деяких $i\in\omega$ i $F_0\in \mathscr{F}$. Якщо $\mathscr{F}=\{\varnothing\}$, то твердження теореми оче\-вид\-не, а тому надалі вважатимемо, що $F_0\neq\varnothing$. Якщо $i\neq 0$, то
\begin{equation*}
  (0,0,F_0)\cdot (i,i,F_0)=(i,i,(-i+F_0)\cap F_0)\neq(0,0,F_0),
\end{equation*}
звідки випливає, що елемент $(0,0,F_0)$ є одиницею напівгрупи $\boldsymbol{B}_{\omega}^{\mathscr{F}}$. З рівності
\begin{equation*}
  (0,0,F_0)\cdot(1,1,F_0)=(1,1,(-1+F_0)\cap F_0)=(1,1,F_0)
\end{equation*}
випливає, що $F_0$ --- індуктивна множина в $\omega$. Позаяк
\begin{equation*}
  (0,0,F_0)\cdot(0,0,F)=(0,0,F_0\cap F)=(0,0,F)
\end{equation*}
для довільної множини $F\in \mathscr{F}$, то $F_0\supseteq\bigcup\mathscr{F}$, а з того, що $F_0\in \mathscr{F}$ отримуємо рівність $F_0=\bigcup\mathscr{F}$.

\smallskip
$(\Longleftarrow)$ Доведемо, що елемент $(0,0,F_0)$ є у випадку, коли сім'я $\mathscr{F}$ містить індуктивну множину $F_0$ в $\omega$ таку, що $F_0=\bigcup\mathscr{F}$, є одиницею напівгрупи $\boldsymbol{B}_{\omega}^{\mathscr{F}}$. Нехай $(i,j,F)$~--- довільний елемент напівгрупи $\boldsymbol{B}_{\omega}^{\mathscr{F}}$. Оскільки $F_0$~--- індуктивна підмножина в $\omega$, то $F_0\subseteq ((-i+F_0)\cap\omega)$, і з рівності $F_0=\bigcup\mathscr{F}$ випливає, що $(-i+F_0)\cap F=F$ для довільних $i\in\omega$ i $F\in\mathscr{F}$, а отже, отримуємо, що
\begin{equation*}
  (0,0,F_0)\cdot(i,j,F)=(i,j,(-i+F_0)\cap F)=(i,j,F)
\end{equation*}
i
\begin{equation*}
  (i,j,F)\cdot(0,0,F_0)=(i,j,(-i+F_0)\cap F)=(i,j,F).
\end{equation*}
Теорему доведено.
\end{proof}

\begin{proposition}\label{proposition-3.11}
Напівгрупа $\boldsymbol{B}_{\omega}^{\mathscr{F}}$ ізоморфна біциклічному моноїду ${\mathscr{C}}(p,q)$ тоді і тільки тоді, коли $\mathscr{F}=\{F\}$ i $F$~--- непорожня індуктивна підмножина в $\omega$.
\end{proposition}

\begin{proof}
$(\Longleftarrow)$ Нехай $F$~--- індуктивна підмножина в $\omega$ і $\mathscr{F}=\{F\}$. Для довільних $i,j,k,l\in\omega$ маємо, що
\begin{align*}
  (i,j,F)\cdot (k,l,F)&=
\left\{
  \begin{array}{ll}
    (i-j+k,l,(j-k+F)\cap F), & \hbox{якщо~} j\leqslant k; \\
    (i,j-k+l,F\cap (k-j+F)), & \hbox{якщо~} j\geqslant k
  \end{array}
\right.= \\
  &=
\left\{
  \begin{array}{ll}
    (i-j+k,l,F), & \hbox{якщо~} j\leqslant k; \\
    (i,j-k+l,F), & \hbox{якщо~} j\geqslant k,
  \end{array}
\right.
\end{align*}
а тоді з означення напівгрупової операції на біциклічному моноїді ${\mathscr{C}}(p,q)$ випливає, що відображення $\mathfrak{f}\colon \boldsymbol{B}_{\omega}^{\mathscr{F}}\to {\mathscr{C}}(p,q)$, означене за формулою $\mathfrak{f}(i,j,F)=q^ip^j$ є ізоморфізмом.

\smallskip
$(\Longrightarrow)$
Припустимо, що напівгрупа $\boldsymbol{B}_{\omega}^{\mathscr{F}}$ ізоморфна біциклічному моноїду ${\mathscr{C}}(p,q)$. Оскільки біциклічний моноїд не містить нуля, то $\varnothing\not\in \mathscr{F}$. Також з теореми \ref{theorem-3.10} випливає, що сім'я $\mathscr{F}$ містить індуктивну множину $F_0$ в $\omega$ таку, що $F_0=\bigcup\mathscr{F}$. Розглянемо довільну множину $F\in \mathscr{F}$.  Оскільки біциклічний моноїд є біпростою інверсною напівгрупою та напівгрупа $\boldsymbol{B}_{\omega}^{\mathscr{F}}$ ізоморфна біциклічному моноїду ${\mathscr{C}}(p,q)$, то $F=F_0$ за теоремою  \ref{theorem-3.7}, а отже, $\mathscr{F}=\{F_0\}$.
\end{proof}

З твердження~\ref{proposition-3.11} випливає наслідок \ref{corollary-1114}.

\begin{corollary}\label{corollary-1114}
Нехай $\mathscr{F}$ --- ${\omega}$-замкнена підсім'я в  $\mathscr{P}(\omega)$. Якщо сім'я $\mathscr{F}$ містить непорожню індуктивну підмножину в $\omega$, то напівгрупа $\boldsymbol{B}_{\omega}^{\mathscr{F}}$ містить ізоморфну копію біциклічного моноїда.
\end{corollary}

\begin{theorem}\label{theorem-3.12}
Нехай $\mathscr{F}$ --- ${\omega}$-замкнена підсім'я в  $\mathscr{P}(\omega)$. Тоді такі умови екві\-ва\-лентні:
\begin{itemize}
  \item[$\boldsymbol{(i)}$] $\boldsymbol{B}_{\omega}^{\mathscr{F}}$~--- біпроста напівгрупа;
  \item[$\boldsymbol{(ii)}$] $\mathscr{F}$ --- одноелементна сім'я;
  \item[$\boldsymbol{(iii)}$] напівгрупа $\boldsymbol{B}_{\omega}^{\mathscr{F}}$ або тривіальна, або ізоморфна біциклічному моноїду.
\end{itemize}
\end{theorem}

\begin{proof}
Еквівалентність умов $\boldsymbol{(i)}$ і $\boldsymbol{(ii)}$ випливає з теореми \ref{theorem-3.7}$\boldsymbol{(iv)}$.

\smallskip
Імплікація $\boldsymbol{(iii)}\Longrightarrow\boldsymbol{(ii)}$ випливає з твердження \ref{proposition-3.11}.

\smallskip
Доведемо імплікацію $\boldsymbol{(ii)}\Longrightarrow\boldsymbol{(iii)}$. Якщо $\mathscr{F}=\{\varnothing\}$, то з означення напівгрупи $\boldsymbol{B}_{\omega}^{\mathscr{F}}$ випливає, що $\boldsymbol{B}_{\omega}^{\mathscr{F}}$ --- тривіальна (одноелементна) напівгрупа. Тому припустимо, що $\mathscr{F}=\{F\}$ для деякої непорожньої множини $F$. Позаяк сім'я $\mathscr{F}$ --- ${\omega}$-замкнена, то
$(-1+F)\cap F=F$, а отже, за лемою \ref{ind-sets}, $F$~--- індуктивна підмножина в $\omega$. Далі скористаємося твердженням \ref{proposition-3.11}.
\end{proof}

Якщо $\lambda$~--- ненульовий кардинал, то множина $\boldsymbol{\mathcal{B}}_\lambda=(\lambda\times\lambda)\sqcup\{0\}$ з напівгруповою операцією
\begin{equation*}
  (a,b)\cdot(c,d)=
\left\{
  \begin{array}{cl}
    (a,d), & \hbox{якщо~} b=c; \\
    0,     & \hbox{якщо~} b\neq c,
  \end{array}
\right.
\qquad \hbox{i} \qquad (a,b)\cdot 0=0\cdot(a,b)=0\cdot 0=0,
\end{equation*}
де $a,b,c,d\in\lambda$, називається \emph{напівгрупою $\lambda{\times}\lambda$-матричних одиниць} \cite{Lawson-1998, Petrich1984}.

\begin{proposition}\label{proposition-3.13}
Нехай $\mathscr{F}$ --- ${\omega}$-замкнена підсім'я в  $\mathscr{P}(\omega)$.
Напівгрупа $\boldsymbol{B}_{\omega}^{\mathscr{F}}$ ізо\-морфна напівгрупі $\omega{\times}\omega$-матричних одиниць $\boldsymbol{\mathcal{B}}_\omega$ тоді і тільки тоді, коли $\mathscr{F}=\{F,\varnothing\}$, де $F$~--- одноточкова підмножина в $\omega$.
\end{proposition}

\begin{proof}
$(\Longleftarrow)$
Припустимо, що $\mathscr{F}=\{F,\varnothing\}$ i $F$~--- одноточкова підмножина в $\omega$. Для довільних $i,j,k,l\in\omega$ отримаємо
\begin{align*}
  (i,j,F)\cdot (k,l,F)&=
\left\{
  \begin{array}{ll}
    (i-j+k,l,(j-k+F)\cap F), & \hbox{якщо~} j<k; \\
    (i,l,F\cap F),            & \hbox{якщо~} j=k\\
    (i,j-k+l,F\cap (k-j+F)), & \hbox{якщо~} j>k
  \end{array}
\right.= \\
  &=
\left\{
  \begin{array}{cl}
    (i,l,F),  & \hbox{якщо~} j=k \\
    0,        & \hbox{якщо~} j\neq k,
  \end{array}
\right.
\end{align*}
і очевидно, що
\begin{equation*}
(i,j,F)\cdot 0=0\cdot(i,j,F)=0\cdot 0=0,
\end{equation*}
звідки випливає, що відображення $\mathfrak{f}\colon \boldsymbol{B}_{\omega}^{\mathscr{F}}\to \boldsymbol{\mathcal{B}}_\omega$, означене за формулою $\mathfrak{f}(i,j,F)=(i,j)$ є ізоморфізмом.

\smallskip
$(\Longrightarrow)$
Припустимо, що напівгрупа $\boldsymbol{B}_{\omega}^{\mathscr{F}}$ ізоморфна напівгрупі $\omega{\times}\omega$-матричних одиниць $\boldsymbol{\mathcal{B}}_\omega$. Зафіксуємо довільний ненульовий елемент $(i,j,F)$ напівгрупи $\boldsymbol{B}_{\omega}^{\mathscr{F}}$. Якщо $(i,j,F)$ не є ідемпотентом, то за лемою~\ref{lemma-3.2} маємо, що $i\neq j$, а тоді
\begin{equation*}
  0=(i,j,F)\cdot(i,j,F)=
\left\{
  \begin{array}{ll}
    (i-j+i,k,(j-i+F)\cap F), & \hbox{якщо~} j<k; \\
    (i,j-i+j,F\cap (i-j+F)), & \hbox{якщо~} j>k.
  \end{array}
\right.
\end{equation*}
Отже, для довільних різних $i,j\in\omega$ одержимо, що
\begin{equation*}
(j-i+F)\cap F=\varnothing=F\cap (i-j+F),
\end{equation*}
а це означає, що для довільного натурального числа $k$ маємо, що $-k+F\cap F=\varnothing$. Звідси випливає, що множина $F$ одноелементна.

Припустимо, що сім'я $\mathscr{F}$ містить дві одноелементні множини $F_k=\{k\}$ i $F_l=\{l\}$, для деяких різних $k,l\in \omega$. Не зменшуючи загальності, можемо вважати, що $k<l$. За лемою~\ref{lemma-3.2} для довільного $i\in\omega$ елементи $(i+k,i+k,F_k)$ i $(i+l,i+l,F_l)$ є ідемпотентами напівгрупи $\boldsymbol{B}_{\omega}^{\mathscr{F}}$, причому $(i+k,i+k,F_k)\neq(i+l,i+l,F_l)$, оскільки $k\neq l$. Позаяк напівгрупи $\boldsymbol{B}_{\omega}^{\mathscr{F}}$ і  $\boldsymbol{\mathcal{B}}_\omega$ ізоморфні, то всі ненульові ідемпотенти в $\boldsymbol{B}_{\omega}^{\mathscr{F}}$ примітивні, а отже, отримуємо, що
\begin{align*}
  0&=(i+k,i+k,F_k)\cdot(i+l,i+l,F_l)=\\
   &=(i+l,i+l,(k-l+F_l)\cap F_k)=\\
   &=(i+l,i+l,F_k)\neq 0,
\end{align*}
суперечність. З отриманої суперечності випливає, що сім'я $\mathscr{F}$ містить лише одну одноелементну множину.
\end{proof}

Для довільного натурального числа $n$ позначимо $n\omega=\{n\cdot i\colon i\in\omega\}$.

\begin{lemma}\label{lemma-shift}
Нескінченна підмножина $F\subseteq \omega$ задовольняє умову
\begin{itemize}
  \item[($*$)] $(-k+F)\cap F=F$ або $(-k+F)\cap F=\varnothing$ для довільного натурального числа $k$
\end{itemize}
тоді і лише тоді, коли $F=i_0+n\omega$ для деяких натурального числа $n$ та $i_0\in\omega$.
\end{lemma}

\begin{proof}
Імплікація $(\Longleftarrow)$ очевидна.

\smallskip
$(\Longrightarrow)$ Припустимо, що нескінченна підмножина $F\subseteq \omega$ задовольняє умову ($*$). Очевидно, оскільки $F$~--- нескінченна множина, то існує найменше натуральне число $n_0$ таке, що $(-n_0+F)\cap F\neq\varnothing$, а отже,  $(-n_0+F)\cap F=F$. Якщо $n_0=1$, то $F$~--- індуктивна підмножина в $\omega$ за лемою \ref{ind-sets}, а отже, $F=i_0+\omega$, де $i_0=\min F$.

Далі вважатимемо, що $n_0>1$. Якщо $j_0=\min F$, то $j_0+n_0\in F$, а отже $j_0+n_0\omega\subseteq F$. Припустимо, що існує число $m\in\omega\setminus(j_0+n_0\omega)$ таке, що $m\in F$, і нехай $j_0+kn_0<m<j_0+(k+1)n_0$ для деякого натурального числа $k$. Тоді
\begin{equation*}
m-(j_0+n_0),\ldots,m-(j_0+kn_0)\in F.
\end{equation*}
Однак $m-(j_0+kn_0)<n_0$, а отже існує таке натуральне число $m-(j_0+kn_0)$, що
\begin{equation*}
-(m-(j_0+kn_0))+F\subseteq F,
\end{equation*}
що суперечить вибору числа $n_0$. З отриманої суперечності випливає  імплікація ${(\Longrightarrow)}$.
\end{proof}

Нагадаємо \cite{Lawson-1998}, шо інверсна напівгрупа $S$ з нулем $0$ називається \emph{$0$-біпростою}, якщо $S$ має лише два $\mathscr{D}$-класи: $S\setminus\{0\}$ i $\{0\}$.

\begin{example}\label{example-3.15}
Зафіксуємо довільні $i_0\in\omega$ та натуральне число $j_0$. Приймемо $\boldsymbol{B}_{\omega}^{(i_0,j_0)}=\boldsymbol{B}_{\omega}^{\mathscr{F}}$, де $\mathscr{F}=\left\{\varnothing,i_0+j_0\omega\right\}$. Тоді, очевидно, що $\mathscr{F}$~--- $\omega$~-замкнена сім'я в  $\mathscr{P}(\omega)$, а також за теоремою~\ref{theorem-3.5} інверсна напівгрупа $\boldsymbol{B}_{\omega}^{(i_0,j_0)}$ є $0$-біпростою. Більше того, для довільного $i_0\in\omega$ за твердженням \ref{proposition-3.11} напівгрупа $\boldsymbol{B}_{\omega}^{(i_0,1)}$ ізоморфна біциклічному моноїду з приєднаним нулем.
\end{example}

Безпосередньо звичайною перевіркою доводиться, що для довільних $i_1,i_2\in\omega$ та довільного натурального числа $j_0$ відображення $\mathfrak{h}\colon \boldsymbol{B}_{\omega}^{(i_1,j_0)}\to \boldsymbol{B}_{\omega}^{(i_2,j_0)}$, означене
\begin{equation*}
  \mathfrak{h}(n,m,i_1+j_0\omega)=(n,m,i_2+j_0\omega) \qquad \hbox{i} \qquad \mathfrak{h}(0)=0
\end{equation*}
є ізоморфізмом. Отже, виконується твердження \ref{proposition-3.16}.

\begin{proposition}\label{proposition-3.16}
Для довільних $i_1,i_2\in\omega$ та довільного натурального числа $j_0$ напівгрупи $\boldsymbol{B}_{\omega}^{(i_1,j_0)}$ i $\boldsymbol{B}_{\omega}^{(i_2,j_0)}$ ізоморфні.
\end{proposition}

Теорема \ref{theorem-3.17} описує структуру $0$-біпростих інверсних напівгруп $\boldsymbol{B}_{\omega}^{\mathscr{F}}$ з точністю до ізоморфізму напівгруп.

\begin{theorem}\label{theorem-3.17}
Нехай $\mathscr{F}$ --- ${\omega}$-замкнена підсім'я в  $\mathscr{P}(\omega)$, $\varnothing\in\mathscr{F}$ i $\boldsymbol{B}_{\omega}^{\mathscr{F}}$~--- $0$-біпроста напівгрупа. Тоді виконується лише одна з умов:
\begin{itemize}
  \item[$(1)$] напівгрупа $\boldsymbol{B}_{\omega}^{\mathscr{F}}$ ізоморфна напівгрупі $\omega{\times}\omega$-матричних одиниць $\boldsymbol{\mathcal{B}}_\omega$;
  \item[$(2)$] напівгрупа $\boldsymbol{B}_{\omega}^{\mathscr{F}}$ ізоморфна біциклічному моноїду з приєднаним нулем;
  \item[$(3)$] напівгрупа $\boldsymbol{B}_{\omega}^{\mathscr{F}}$ ізоморфна напівгрупі $\boldsymbol{B}_{\omega}^{(0,j_0)}$ для деякого натурального числа $j_0\geqslant 2$.
\end{itemize}
\end{theorem}

\begin{proof}
За теоремою \ref{theorem-3.7}$\boldsymbol{(iv)}$ сім'я $\mathscr{F}$ містить непорожню множину $F$ і порож\-ню множину $\varnothing$. Припустимо, що множина $F$ скінченна. Тоді, аналогічно, як і в твердженні \ref{proposition-3.13} доводиться, що $F$ одноелементна множина, а отже, за твердженням \ref{proposition-3.13} напівгрупа $\boldsymbol{B}_{\omega}^{\mathscr{F}}$ ізоморфна напівгрупі $\omega{\times}\omega$-матричних одиниць $\boldsymbol{\mathcal{B}}_\omega$.

Якщо ж $F$ --- нескінченна множина, то з теореми \ref{theorem-3.7}$\boldsymbol{(iv)}$ випливає, що викону\-ється одна з умов
\begin{equation*}
a)~(-1+F)\cap F=F \qquad \hbox{або} \qquad b)~(-1+F)\cap F=\varnothing.
\end{equation*}

У випадку $a)$ за твердженням \ref{proposition-3.11} множина $\boldsymbol{B}_{\omega}^{\mathscr{F}}\setminus\{0\}$ є піднапівгрупою в $\boldsymbol{B}_{\omega}^{\mathscr{F}}$, яка ізоморфна біциклічному моноїду, а отже, виконується твердження $(2)$.

Якщо ж виконується умова $b)$, то за лемою \ref{lemma-shift} матимемо, що $F=j_0+i_0\omega$ для деяких натурального числа $i_0$ та $j_0\in\omega$. Застосувавши твердження \ref{proposition-3.16}, отримуємо, що виконується твердження $(3)$.
\end{proof}

Нагадаємо \cite{Lawson-1998, Petrich1984}, що \emph{найменша} (\emph{мінімальна}) \emph{групова конгруенція} $\boldsymbol{\sigma}$ на ін\-версній напівгрупі $S$ визначається так:
\begin{equation*}
  s\boldsymbol{\sigma}t \qquad \Longleftrightarrow \qquad es=et \quad \hbox{для деякого} \quad e\in E(S).
\end{equation*}

Очевидно, якщо $\mathscr{F}$ --- ${\omega}$-замкнена підсім'я в  $\mathscr{P}(\omega)$ i $\varnothing\in\mathscr{F}$, то напівгрупа $\boldsymbol{B}_{\omega}^{\mathscr{F}}$ містить нуль, а отже, фактор-напівгрупа $\boldsymbol{B}_{\omega}^{\mathscr{F}}/\boldsymbol{\sigma}$ ізоморфна тривіальній напівгрупі.

\begin{proposition}\label{proposition-3.19}
Нехай $\mathscr{F}$ --- ${\omega}$-замкнена підсім'я в  $\mathscr{P}(\omega)$ i $\varnothing\notin\mathscr{F}$. Тоді \linebreak $(i_1,j_1,F_1)\boldsymbol{\sigma}(i_2,j_2,F_2)$ в $\boldsymbol{B}_{\omega}^{\mathscr{F}}$ тоді і тільки тоді, коли $i_1-j_1=i_2-j_2$, а отже, фактор-напівгрупа $\boldsymbol{B}_{\omega}^{\mathscr{F}}/\boldsymbol{\sigma}$ ізоморфна адитивній групі цілих чисел $\mathbb{Z}(+)$.
\end{proposition}

\begin{proof}
Нехай $(i_1,j_1,F_1)$ i $(i_2,j_2,F_2)$ --- довільні  елементи напівгрупи $\boldsymbol{B}_{\omega}^{\mathscr{F}}$. З означення найменшої групової конгруенції $\boldsymbol{\sigma}$ випливає, що $(i_1,j_1,F_1)\boldsymbol{\sigma}(i_2,j_2,F_2)$ тоді і тільки тоді, коли існує елемент $(i,j,F)\in\boldsymbol{B}_{\omega}^{\mathscr{F}}$ такий, що $(i,j,F)\preccurlyeq(i_1,j_1,F_1)$ i $(i,j,F)\preccurlyeq(i_2,j_2,F_2)$. Тоді за твердженням~\ref{proposition-3.8} отримаємо, що
\begin{equation*}
F\subseteq -k_1+F_1 \qquad \hbox{i} \qquad i-i_1=j-j_1=k_1
\end{equation*}
та
\begin{equation*}
F\subseteq -k_2+F_2 \qquad \hbox{i} \qquad i-i_2=j-j_2=k_2
\end{equation*}
для деяких $k_1,k_2\in\omega$. Отож з $(i_1,j_1,F_1)\boldsymbol{\sigma}(i_2,j_2,F_2)$ в $\boldsymbol{B}_{\omega}^{\mathscr{F}}$ випливає, що
\begin{equation*}
i_1-j_1=i_2-j_2=i-j.
\end{equation*}

Припустимо, що для елементів $(i_1,j_1,F_1)$ i $(i_2,j_2,F_2)$  напівгрупи $\boldsymbol{B}_{\omega}^{\mathscr{F}}$ справджується рівність $i_1-j_1=i_2-j_2$. Не зменшуючи загальності можемо вважати, що $i_1>i_2$. Позаяк $\mathscr{F}$ --- ${\omega}$-замкнена підсім'я в  $\mathscr{P}(\omega)$, то
\begin{equation*}
F=F_1\cap (i_2-i_1+F_2)\in \mathscr{F}.
\end{equation*}
Тоді $j_1>j_2$ і за твердженням~\ref{proposition-3.8} отримаємо, що
\begin{align*}
  (i_1,j_1,F)&=(i_1,j_1,F\cap F_1)=(i_1,i_1,F)\cdot(i_1,j_1,F_1) \preccurlyeq(i_1,j_1,F_1)
\end{align*}
i
\begin{align*}
    (i_1,i_1,F)\cdot (i_2,j_2,F_2)&=(i_1,i_1-i_2+j_2,F\cap(i_2-i_1+F_2))=\\
                                  &=(i_1,j_1-j_2+j_2,F\cap(i_2-i_1+F_2))=\\
                                  &=(i_1,j_1,F\cap(i_2-i_1+F_2))=\\
                                  &=(i_1,j_1,F)\preccurlyeq\\
                                  &\preccurlyeq(i_2,j_2,F_2),
\end{align*}
а отже, $(i_1,j_1,F_1)\boldsymbol{\sigma}(i_2,j_2,F_2)$ в $\boldsymbol{B}_{\omega}^{\mathscr{F}}$.

Означимо відображення $\mathfrak{h}_{\boldsymbol{\sigma}}\colon \boldsymbol{B}_{\omega}^{\mathscr{F}} \to Z(+)$ за формулою $\mathfrak{h}_{\boldsymbol{\sigma}}(i,j,F)=i-j$. З вище доведеного випливає, що $\mathfrak{h}_{\boldsymbol{\sigma}}(i_1,j_1,F_1)=\mathfrak{h}_{\boldsymbol{\sigma}}(i_2,j_2,F_2)$ тоді і лише тоді, коли $(i_1,j_1,F_1)\boldsymbol{\sigma}(i_2,j_2,F_2)$ в $\boldsymbol{B}_{\omega}^{\mathscr{F}}$, а отже, відображення $\mathfrak{h}_{\boldsymbol{\sigma}}\colon \boldsymbol{B}_{\omega}^{\mathscr{F}} \to Z(+)$ є гомоморфізмом і фактор-напівгрупа $\boldsymbol{B}_{\omega}^{\mathscr{F}}/\boldsymbol{\sigma}$ ізоморфна адитивній групі цілих чисел $\mathbb{Z}(+)$.
\end{proof}


\section*{\textbf{Подяка}}

Автори висловлюють щиру подяку  рецензентов за цінні поради.

\end{document}